\documentclass{article}
\usepackage[utf8]{inputenc}
\usepackage{amssymb,amsmath,amsthm}
\usepackage{hyperref}

\newtheorem{theorem}{Theorem}
\newtheorem{claim}[theorem]{Claim}
\newtheorem{lemma}[theorem]{Lemma}

\newtheorem{conjecture}{Conjecture}

\newcommand{\floor}[1]{\left\lfloor{#1}\right\rfloor}
\newcommand{\ceil}[1]{\left\lceil{#1}\right\rceil}

\usepackage{authblk}
\setlength{\affilsep}{2em}

\DeclareMathOperator{\ex}{ex}

\title{Extremal results for graphs avoiding a rainbow subgraph}

\author[1]{Peter Frankl}
\author[1]{Ervin Győri}
\author[1,2]{Zhen He}
\author[1,2]{Zequn Lv}
\author[1,4]{Nika Salia}
\author[1]{Casey Tompkins}
\author[1,5]{Kitti Varga}
\author[1,3]{Xiutao Zhu}
\date{}

\affil[1]{Alfr\'ed R\'enyi Institute of Mathematics, Hungarian Academy of Sciences. }
\affil[2]{Department of Mathematical Sciences, Tsinghua University.}
\affil[3]{Department of Mathematics, Nanjing University. }
\affil[4]{Extremal Combinatorics and Probability Group, Institute for Basic Science, Daejeon, South Korea.}
\affil[5]{Department of Computer Science and Information Theory, Budapest University of Technology and Economics.}
\begin{document}

\maketitle

\begin{abstract}
We say that $k$ graphs $G_1,G_2,\dots,G_k$ on a common vertex set of size $n$ contain a rainbow copy of a graph $H$ if their union contains a copy of $H$ with each edge belonging to a distinct $G_i$. 
We provide a counterexample to a conjecture of Frankl on the maximum product of the sizes of the edge sets of three graphs avoiding a rainbow triangle. 
We propose an alternative conjecture, which we prove under the additional assumption that the union of the three graphs is complete. 
Furthermore, we determine the maximum product of the sizes of the edge sets of three graphs or four graphs avoiding a rainbow path of length three.

\end{abstract}

The classical theorem of Mantel~\cite{mantel} asserts that the maximum number of edges in an $n$-vertex graph containing no triangle is $\floor{n^2/4}$. 
This result was generalized by Tur\'an~\cite{turan}, who showed that the maximum number of edges in an $n$-vertex graph with no complete graph $K_r$ as a subgraph is obtained by taking a complete $(r-1)$-partite graph with parts of size $\floor{n/(r-1)}$ or $\ceil{n/(r-1)}$.  

Many natural generalizations of these theorems have been considered.
For a graph $G$, let $E(G)$ denote the edge set of $G$ and let $e(G) = |E(G)|$.
Of particular importance to the present work is an extremal problem due to Keevash, Saks, Sudakov and Verstra\"ete~\cite{keevash}. 
They considered (among other problems) the maximum of $e(G_1)+e(G_2)+\cdots+e(G_k)$ across $k$ graphs on a common vertex set of size $n$ with the property that there is no $K_r$ with each of its edges coming from a distinct $E(G_i)$.   
Such a $K_r$ is referred to as a \emph{rainbow}~$K_r$. 
For this problem, there are two natural constructions.
 On the one hand, if $k\ge \frac{r^2-1}{2}$, then we take $k$ identical copies of the Tur\'an graph. 
On the other hand, when $\binom{r}{2}\le k< \frac{r^2-1}{2}$, it is better to take $\binom{r}{2}-1$ copies of the complete graph and let the remaining graphs have empty edge sets (of course for $k<\binom{r}{2}$, it is optimal to take all $k$ graphs to be complete).
Keevash, Saks, Sudakov and Verstra\"ete~\cite{keevash} also proved that for $3$-chromatic color-critical graphs and sufficiently large $n$, an analogous result holds in which either a construction consisting of $k$ copies of a Tur\'an graph, or a construction consisting of complete graphs and graphs with no edges is optimal. 
Recently Chakraborti, Kim, Lee, Liu and Seo~\cite{deb} showed that the same holds for $4$-chromatic color-critical graphs and almost all color-critical graphs of chromatic number at least $5$, partially verifying a conjecture from~\cite{keevash}. 
In the context of extremal set theory, rainbow extremal problems have also been considered earlier, for example by Hilton~\cite{hilton}.

After one has bounds on $e(G_1)+e(G_2)+\cdots+e(G_k)$, it is natural to consider maximizing other objective functions over $e(G_1)$, $e(G_2)$,\dots, $e(G_k)$.   The problem of maximizing $\min\big(e(G_1), e(G_2), e(G_3)\big)$ while avoiding a rainbow triangle was considered by Aharoni, DeVos, de la Maza, Montejano and \v{S}\'amalin~\cite{ah}, answering a question of Diwan and Mubayi~\cite{diwan}. Let $P_r$ denote the path with $r$ vertices. The problem of maximizing $\min\big(e(G_1), e(G_2),\dots e(G_k)\big)$ while avoiding a rainbow $P_4$ was considered by Babi\'nski and Grzesik~\cite{Ba}.
For the problem of maximizing the product $e(G_1)e(G_2)e(G_3)$ while avoiding a rainbow triangle, Frankl~\cite{frankl} gave the following conjecture.

\begin{conjecture}[Frankl] \label{frankl}
Let $G_1,G_2,G_3$ be graphs on a common vertex set of size $n$ with no rainbow triangle. 
Then
\[
e(G_1)e(G_2)e(G_3) \le \floor{\frac{n^2}{4}}^3.
\]
\end{conjecture}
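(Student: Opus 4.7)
The abstract signals that Conjecture~\ref{frankl} is false, so my aim is not to prove it but to build a concrete counterexample: three graphs $G_1,G_2,G_3$ on $n$ vertices containing no rainbow triangle with $e(G_1)e(G_2)e(G_3)>\floor{n^2/4}^{3}$ for all sufficiently large $n$. The symmetric construction $G_1=G_2=G_3=T(n,2)$ already meets the conjectured bound with equality, so an asymmetric construction is needed. The cleanest way I can think of to produce rainbow-triangle-freeness asymmetrically is to let two of the $G_i$ coincide and be supported on a common clique: then any triangle using one edge from each of these two graphs is forced inside that clique, which in turn constrains the third edge.

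Following this heuristic, I would fix $t\in(0,1)$ to be optimised, set $a=\floor{tn}$, partition the vertex set as $V=A\cup B$ with $|A|=a$, and define
\[
G_1=G_2=K_A,\qquad G_3=K_n\setminus E(K_A).
\]
Suppose, for contradiction, that $xyz$ is a rainbow triangle. Two of its three edges, namely the ones contributing to $G_1$ and to $G_2$, lie inside $A$ because $G_1=G_2=K_A$; being distinct edges of a triangle they share a common vertex, and hence all three vertices of the triangle lie in $A$. But then the third edge also lies in $E(K_A)$, so it cannot belong to $G_3=K_n\setminus E(K_A)$, a contradiction. Hence this construction has no rainbow triangle.

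With $e(G_1)=e(G_2)=\binom{a}{2}$ and $e(G_3)=\binom{n}{2}-\binom{a}{2}$, the product is
\[
e(G_1)e(G_2)e(G_3)=\binom{a}{2}^{2}\!\left(\binom{n}{2}-\binom{a}{2}\right)\sim\frac{t^{4}(1-t^{2})}{8}\,n^{6}.
\]
Maximising $t^{4}(1-t^{2})$ over $(0,1)$ yields $t=\sqrt{2/3}$ with value $4/27$, so the product is asymptotic to $n^{6}/54>n^{6}/64\ge\floor{n^2/4}^{3}$, disproving the conjecture for all large $n$. There is essentially no obstacle here: the entire content of the disproof is the observation that taking $|A|$ slightly above $n/\sqrt{2}$ beats the Tur\'an construction. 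Turning the asymptotic into a strict inequality for every sufficiently large $n$ is a routine polynomial comparison using any fixed choice such as $a=\floor{\sqrt{2/3}\,n}$, since $\tfrac{1}{54}-\tfrac{1}{64}>0$ is a positive constant and the boundary corrections are of lower order.
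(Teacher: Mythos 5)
Your proposal is correct as a disproof of Conjecture~\ref{frankl}: the verification that $G_1=G_2=K_A$, $G_3=K_n\setminus E(K_A)$ has no rainbow triangle is sound (two distinct edges of a triangle cover all three vertices, so the triangle lies in $A$ and its third edge is unavailable to $G_3$), and the optimization giving $\binom{a}{2}^2\bigl(\binom{n}{2}-\binom{a}{2}\bigr)\sim n^6/54>n^6/64\ge\floor{n^2/4}^3$ is accurate. The paper's counterexample (Theorem~\ref{main}) is the same basic idea but squeezes out more: keeping your $G_1=K_X$ and $G_3=K_Y\cup(X\times Y)$, it additionally places the clique $K_Y$ into $G_2$, which costs nothing --- an edge inside $Y$ cannot share a vertex with an edge inside $X$, so no new rainbow triangle can arise --- and then re-optimizes the split, landing at $x_0\approx 0.729$ rather than your $\sqrt{2/3}\approx 0.816$ and at a product of $\gamma n^6$ with $\gamma\in(1/52,1/51)$, i.e.\ roughly $0.0194n^6$ versus your $0.0185n^6$. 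The distinction matters beyond aesthetics, because the paper conjectures (and, under the assumption that every pair is colored, proves) that this augmented construction is asymptotically extremal; your construction alone would suggest the wrong extremal value. So you have fully refuted Frankl's conjecture, but if your goal includes identifying the true maximum, you should check whether your $G_i$ are maximal, i.e.\ whether edges can be added to any of them without creating a rainbow triangle --- here they can.
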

Taking $G_1,G_2,G_3$ to be $3$ copies of the complete bipartite graph with almost equal parts attains this bound. 
Frankl proved that under the additional assumption $E(G_1) \subseteq E(G_2)$ and $E(G_1) \subseteq E(G_3)$, Conjecture~\ref{frankl} holds.  
We show that Frankl's conjecture does not hold in the general case.  

Let $\gamma$ be the maximum of
\begin{equation} \label{opto}
\frac{x^2}{2}\left(\frac{x^2}{2}+\frac{(1-x)^2}{2}\right)\left(x(1-x)+\frac{(1-x)^2}{2}\right)
\end{equation}
on $[0,1]$, and assume $\gamma$ is attained at $x=x_0$. Note that $\frac{1}{52}<\gamma<\frac{1}{51}$ (and $x_0 \approx .729$).  We have the following.

\begin{theorem} \label{main}
There exist graphs $G_1,G_2,G_3$ on a common vertex set of size $n$ and with no rainbow triangle such that
\[
e(G_1)e(G_2)e(G_3) \ge \gamma n^6 \big( 1 - o(1) \big).
\]
\end{theorem}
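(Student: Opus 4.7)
The form of~\eqref{opto} suggests that each of its three factors corresponds, up to an $n^2$ normalization, to the edge count of a natural graph on a bipartition of the vertex set with parts in ratio $x:(1-x)$. Concretely, my plan is to partition $[n]$ into disjoint sets $A$ and $B$ with $|A|=\floor{x_0 n}$, and then take $G_1$ to be the clique on $A$, $G_2$ to be the union of the cliques on $A$ and on $B$, and $G_3$ to be the complete bipartite graph between $A$ and $B$ together with the clique on $B$. With this choice
\[
e(G_1)=\binom{|A|}{2},\qquad e(G_2)=\binom{|A|}{2}+\binom{|B|}{2},\qquad e(G_3)=|A||B|+\binom{|B|}{2},
\]
so that the product $e(G_1)e(G_2)e(G_3)$ matches $n^6$ times the value of~\eqref{opto} at $x_0$, up to a $1-o(1)$ factor absorbing the passage from binomial coefficients to monomials and the rounding in the choice of $|A|$.

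The substantive step is the verification that this triple admits no rainbow triangle, which I would carry out by case-splitting on how the three vertices of a putative triangle meet $A$. If all three lie in $A$, every edge is an $A$--$A$ pair and $G_3$ contains no such edge. If all three lie in $B$, every edge is a $B$--$B$ pair and $G_1$ has no edge on these vertices. If exactly two vertices lie in $A$ and one in $B$, the two incident $A$--$B$ edges belong only to $G_3$, so at most one of them can receive the color $G_3$ while the other has no color available from $\{G_1,G_2\}$. Finally, if exactly one vertex lies in $A$, the triangle has no $A$--$A$ edge, leaving $G_1$ with no candidate. In each of these four cases a rainbow coloring is impossible.

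The approach is essentially forced once one reads~\eqref{opto} as an asymptotic edge count for a two-part construction, so I do not anticipate a genuine obstacle. The only real work is the four-case check above, and estimating the binomial coefficients while absorbing the integrality of $|A|$ into the $1-o(1)$ factor is routine.
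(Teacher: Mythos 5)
Your construction is identical to the paper's (clique on $X$ for $G_1$, two cliques for $G_2$, clique on $Y$ plus the bipartite edges for $G_3$), and your four-case verification of rainbow-triangle-freeness is correct; the paper simply leaves that check to the reader. So the proposal is correct and takes essentially the same approach as the paper.
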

 
\begin{proof}
Let $[n]=X\cup Y$ be a partition of $[n]$ with $X$ having size approximately~$x_0 n$ and $Y$ having size approximately $(1-x_0)n$.  Let $G_1$ consist of a complete graph on $X$, and $G_2$ consist of the union of a complete graph on $X$ and a complete graph on $Y$ and let $G_3$ consist of a complete graph on $Y$ as well as all edges between $X$ and $Y$.
Observe that the product $e(G_1)e(G_2)e(G_3)$ is asymptotically $\gamma n^6$. 
\end{proof}
Moreover, we believe that the expression in Theorem~\ref{main} is asymptotically best possible.  
\begin{conjecture} \label{mainconj}
For three graphs $G_1,G_2,G_3$ on a common vertex set of size $n$ with no rainbow triangle, 
\[
e(G_1)e(G_2)e(G_3) \le \gamma n^6 \big( 1+o(1) \big).
\]
\end{conjecture}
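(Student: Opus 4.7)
The natural plan is to reduce Conjecture~\ref{mainconj} to the covering case established elsewhere in the paper. For each pair $\{a,b\}$, write $\chi(ab) = \{i : ab \in E(G_i)\}$; call $\{a,b\}$ \emph{uncovered} if $\chi(ab) = \emptyset$. Let $(G_1, G_2, G_3)$ be a near-extremal triple on $n$ vertices with uncovered-pair set $U \subseteq \binom{[n]}{2}$. If we can modify $(G_1,G_2,G_3)$, losing only a $(1-o(1))$ multiplicative factor in the product, into a covering triple, then the covering-case bound yields the desired inequality.

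The reduction would proceed by iteratively covering pairs of $U$. For each $\{u,v\} \in U$ and each $i \in \{1,2,3\}$, let $D_i(u,v)$ be the set of vertices $w$ for which adding $uv$ to $E(G_i)$ would create a rainbow triangle on $\{u,v,w\}$: such a $w$ must have $\chi(uw)$ and $\chi(vw)$ together containing the two colors of $\{1,2,3\} \setminus \{i\}$, one each. A double-counting over $U$, indices $i$, and dangerous third vertices $w$ should identify for each $\{u,v\}\in U$ a cheap choice of $i$ so that the total collateral damage summed over the process is $o(n^3)$. Adding the chosen edges and then removing one edge per newly created rainbow triangle eliminates $U$ at multiplicative cost $(1-o(1))$, after which the covering case finishes the argument.

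For intuition regarding why the template of Theorem~\ref{main} should be (asymptotically) the unique extremizer: in the covering case, the no-rainbow condition forces every triangle either to have two edges equal to the same singleton profile or all three edges missing a common color (an SDR/K\"onig's theorem analysis applied to the three profiles on the triangle). These are precisely the triangle types realized by the two-part template $V=X\cup Y$ with profiles $\{1,2\}$ inside $X$, $\{2,3\}$ inside $Y$, and $\{3\}$ between, suggesting that a Zykov-style exchange argument on the covering-case bound should reduce any near-extremal covering triple to a blow-up of this template, and optimization over part sizes recovers (\ref{opto}) and hence $\gamma$.

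The main obstacle is cost control in the iterative covering step: naive greedy covering could in principle destroy $\Omega(n^2)$ edges, and pushing the aggregate loss down to $o(n^2)$ will likely need stability information about the extremal configuration, namely that it is already close to the two-part template. Circumventing this would require either a direct attack on the non-covering case or a sharper averaging argument showing that each uncovered pair typically admits a nearly-free covering choice. A flag-algebra-style finite template enumeration is a possible alternative route, reducing the problem to a bounded LP-style optimization.
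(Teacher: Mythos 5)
The statement you are trying to prove is stated in the paper as a \emph{conjecture}: the authors do not prove it in general, but only under the additional hypothesis that every pair of vertices is colored (their Theorem~\ref{sub}, the ``covering case'' your reduction targets). So the relevant question is whether your reduction from the general case to the covering case is sound, and there is a genuine gap at exactly the step you flag: the claim that each uncovered pair admits a color choice whose ``collateral damage,'' aggregated over the process, is $o(n^3)$ triangles and hence $o(n^2)$ deleted edges. This is not merely a technical nuisance. Take $G_1=G_2=G_3=K_{\lfloor n/2\rfloor,\lceil n/2\rceil}$: there is no rainbow triangle, every pair inside a part is uncovered (there are $\Theta(n^2)$ of them), and for each such pair $\{u,v\}$ and \emph{every} choice of color $i$, every vertex $w$ of the opposite part is dangerous, since $uw$ and $vw$ carry all three colors. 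Covering this configuration by your scheme forces the deletion of $\Omega(n^2)$ edges, a constant-factor (not $1-o(1)$) loss in the product. One can object that this example is already below $\gamma n^6$, but that is precisely the point: to rule it out you need to know in advance that near-extremal triples are close to the two-part template, i.e.\ the stability statement you yourself identify as missing. Your argument is therefore circular as it stands, and the double-counting over $U\times\{1,2,3\}\times V$ cannot by itself produce a cheap color for each uncovered pair.

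A secondary issue is that even granting the reduction, you would need the covering-case theorem in a robust form: the paper's Theorem~\ref{sub} is proved for an exactly product-maximal covering triple via local exchange arguments (merging cliques of multi-colored edges, recoloring one-colored edges), and your modified triple after adding and deleting edges is only approximately extremal, so you would need an approximate version of that structural analysis. Your SDR/K\"onig observation about the admissible color-profile patterns on a triangle in the covering case is correct and is a reasonable way to motivate why the two-part template should be extremal, but it is intuition rather than proof. In short: the statement remains a conjecture, your proposal correctly isolates the main obstacle but does not overcome it, and the paper offers no general proof for you to have missed.
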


Conjecture~\ref{mainconj} is proved under the additional assumption that every pair of vertices forms an edge in at least one of the $E(G_i)$. 
For convenience, we say that a pair of vertices (or an edge) in the $n$-vertex ground set is colored if it belongs to at least one of  the sets $E(G_1)$, $E(G_2)$, $E(G_3)$. 
An edge is $t$-colored if it belongs to exactly $t$ of the sets $E(G_1)$, $E(G_2)$, $E(G_3)$. 

\begin{theorem} \label{sub}
Let $G_1,G_2,G_3$ be graphs on a common vertex set of size $n$ with no rainbow triangle. If $n$ is sufficiently large and every pair of vertices on the ground set is colored, then the construction described in Theorem~\ref{main} maximizes $e(G_1)e(G_2)e(G_3)$.
\end{theorem}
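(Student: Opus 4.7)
My plan is to perform a vertex-local analysis that feeds into a global structural theorem via an exchange argument, and to conclude with a one-variable optimization.

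I would begin by rephrasing the no-rainbow-triangle condition via Hall's theorem. For each edge $e$ let $c(e)=\{i : e \in E(G_i)\}\subseteq\{1,2,3\}$, which is nonempty by the hypothesis of Theorem~\ref{sub}. A triangle $uvw$ is rainbow iff the three color sets $c(uv),c(uw),c(vw)$ admit a system of distinct representatives; hence the no-rainbow condition is equivalent to the statement that either $c(uv)\cup c(uw)\cup c(vw)\subsetneq\{1,2,3\}$, or two of these color sets are one and the same singleton.

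Next I would fix a vertex $v$ and partition $V\setminus\{v\}$ into atoms $A_I(v)=\{u : c(uv)=I\}$ for nonempty $I\subseteq\{1,2,3\}$. Applying the above Hall-failure conditions to triangles $vuw$ with $u\in A_I(v)$ and $w\in A_J(v)$, I would prove two local constraints: at most one atom $A_I(v)$ with $|I|\ge 2$ is nonempty, and if some $A_I(v)$ with $|I|=2$ is nonempty then $A_{\{1,2,3\}}(v)=\emptyset$. Consequently every vertex $v$ has a unique ``dominant pair'' $d(v)\in\{\{1,2\},\{1,3\},\{2,3\}\}$ or is ``simple'' (no atom of size at least $2$). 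Grouping vertices by $d(v)$ partitions $V$ into at most three nontrivial classes plus a simple set, and a second round of Hall analysis forces edges across distinct dominant-pair classes to be $1$-colored with constrained color choices.

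The crux, and the step I expect to be the main obstacle, is proving that the extremal configuration uses exactly two dominant-pair classes with one specific cross-edge color. I would handle this via a Zykov-style symmetrization combined with direct comparison: if there are vertices of all three dominant pairs, or a nontrivial simple set, I would replace them with clones of vertices in the best existing class and verify that $e(G_1)e(G_2)e(G_3)$ does not decrease. Ruling out the three-class case in particular requires writing the product as a polynomial in the class proportions (with coefficients depending on the admissible cross-edge colors) and checking it falls strictly below $\gamma n^6$ on the whole simplex; this is a finite but nontrivial optimization subcase.

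Once the two-class structure $V=X\cup Y$ is pinned down with edges inside $X$ of type $\{1,2\}$, inside $Y$ of type $\{2,3\}$, and between $X$ and $Y$ of type $\{3\}$, the product $e(G_1)e(G_2)e(G_3)$ is, up to lower-order terms, $n^6$ times the expression in \eqref{opto} with $x=|X|/n$. Maximizing over $x$ yields $\gamma$ as in Theorem~\ref{main}, and a final comparison across the six possible relabelings of the colors confirms that the chosen assignment is optimal, completing the proof for $n$ sufficiently large.
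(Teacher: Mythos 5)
Your Hall-type reformulation of the no-rainbow condition is correct (given that every pair is colored, a triangle is rainbow iff its three color sets admit a system of distinct representatives, i.e.\ unless the total union misses a color or two of the sets are the same singleton), and your vertex-local atom analysis is sound: it recovers, in different packaging, the paper's key Claim that the edges with at least two colors decompose into vertex-disjoint cliques each carrying a fixed color pair, plus a matching of three-colored edges. But the proposal has a genuine gap exactly at the step you yourself flag as the main obstacle. Grouping vertices by dominant pair is strictly coarser than the clique decomposition: two vertices can both have dominant pair $\{1,2\}$ yet lie in different $\{1,2\}$-cliques with only a $1$-colored edge between them, and a vertex outside a $\{1,2\}$-clique may join it either entirely in color $3$ or entirely in singletons from $\{1,2\}$ (both are admissible). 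So ``the product as a polynomial in the class proportions'' is not well defined until you refine each class into its cliques and record, for every pair of cliques, which admissible cross-coloring is used; the optimization is then over an unbounded number of parts and cross-coloring patterns, not over a fixed three-dimensional simplex, and you do not carry it out in any case.

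The paper avoids this entirely by a quantitative step your outline lacks: assuming the product is at least $\gamma n^6(1-o(1))$, AM--GM gives $e(G_1)+e(G_2)+e(G_3)>0.8n^2+n/2$, the three-colored edges form a matching, hence there are at least $0.3n^2$ two-colored edges, which forces a single clique $A$ of size at least $0.723n$ in the two-colored graph. After that, only two clean exchange inequalities are needed (recoloring kills the matching of three-colored edges and then merges the smaller clique into the complement of $A$), and the one-variable optimization \eqref{opto} finishes. You would either need to reproduce such a counting argument or genuinely perform the multi-part optimization. Separately, your Zykov-style cloning needs justification: replacing $u$'s incidences by a copy of $w$'s creates triangles $uwx$ with $c(ux)=c(wx)$, and whether these are rainbow depends on the choice of $c(uw)$ (for instance $c(ux)=c(wx)=\{1,2\}$ with $c(uw)=\{3\}$ is rainbow), and you must keep every pair colored to stay within the theorem's hypothesis. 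As written, the argument is a plausible program but does not yet constitute a proof.
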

	
\begin{proof}
Let $n$ be sufficiently large, and let $G_1,G_2,G_3$ be graphs on a common vertex set of size $n$ with no rainbow triangle and assume that every edge is colored and $e(G_1)e(G_2)e(G_3)$ is maximal. 
For a vertex $v$ and $i \in [3]$, let $N_i(v)$ be the set of neighbors of $v$ in $G_i$ which are not neighbors of $v$ in $G_l$ for all $l\neq i$. For any $\{i,j,l\}=\{1,2,3\}$, we denote the set of neighbors of the vertex $v$ in $G_i$ and $G_j$ but not in $G_l$ by $N_{ij}(v)$.

 Assume $e(G_1)e(G_2)e(G_3) \ge \gamma n^6 \big( 1 - o(1) \big)$. Then we have 
 \[ e(G_1)+e(G_2)+e(G_3)\ge 3\sqrt[3]{e(G_1)e(G_2)e(G_3)}>0.8n^2+\frac{n}{2} \]
 for any sufficiently large $n$. 
 Since three-colored edges cannot be adjacent to any edge with at least two colors, the number of three-colored edges is at most $n/2$. 
 Hence the number of two-colored edges is at least $0.3 n^2$.
 	    
\begin{claim}\label{Claim:main}
The graph containing all edges with at least two colors is the union of vertex disjoint cliques such that every edge of each clique has the same coloring.
\end{claim} 
\begin{proof}
Let $e=uv$ be a two-colored edge, and assume that $e$ has colors $1$ and $2$. 
If $w\in N_3(v)$, then $uw$ is one-colored with color $3$.  That is  $N_3(v)=N_3(u)$ and 
\begin{equation}\label{equation:1}
   N_{13}(u)=N_{23}(u)=\emptyset.  
\end{equation}

Let $w\in N_{12}(v)$. Then $uw$ is not colored with color $3$.
Suppose by the way of contradiction that $uw$ is a one-colored edge. Without loss of generality, we may assume that $uw$ is of color $1$.
Then by the maximality of the coloring, there is a vertex $w'$ such that $w'\notin \{v,u,w\}$ and edges  $uw'$ and $ww'$ are colored with colors $1$ and $3$ in any order.
This is a contradiction since for any coloring of the edge $w'v$, the triangle $w'vu$ or the triangle $w'vw$ is a rainbow triangle.
Hence $w\in N_{1,2}(u)$, thus 
\begin{equation}\label{equation:2}
   N_{12}(u)=N_{12}(v).
\end{equation} 
Then the claim follows from~\eqref{equation:1} and~\eqref{equation:2}.
\end{proof}
   
Let $A$ be a clique of maximum size in the graph consisting of edges of at least two colors.
Let $a$ be the size of $A$. 
Then $\frac{(a-1)n}{2} \ge 0.3 n^2$ since the maximum degree in the graph of the two-colored edges is $a-1$ and we observed earlier that there are at least~$0.3n^2$ two-colored edges.
Hence we have $a\ge 0.6n$. 
Since $a>n/2$ and there are at least $0.3n^2$ two-colored edges, it follows $\binom{a}{2}+\binom{n-a}{2}\ge 0.3 n^2$. 
Thus we have~$a\ge 0.723n$. 

Without loss of generality, we can assume that the edges of $A$ are colored with colors $1$ and $2$. Then we have $e(A)\ge 0.26n^2$ and it follows $e(G_3)\le 0.24n^2$.
From the maximality of the product $e(G_1)e(G_2)e(G_3)$ and Claim~\ref{Claim:main}, we have that all one-colored edges are of color~$3$.
Indeed, otherwise we could change the color of all the one-colored edges to color $3$ and since $e(G_3)<e(A)$, this would increase the product.  
Moreover, for any maximal clique $B$ in the graph different from $A$ and consisting of edges of at least two colors, one of those colors must be color~$3$ (for otherwise, changing one of the colors in $B$ to color~$3$ would increase the product).  

By maximality, it is easy to observe that there is at most one clique colored with colors $i$ and $j$ for all $1\leq i<j\leq 3$. Let $a$ be the size of clique colored with colors $1$ and $2$, let $b$ be the size of clique colored with colors $1$ and $3$, and let $c$ be the size of clique colored with colors $2$ and $3$.
Let $d$ be the number of three-colored edges. 
Then $G_1$ consists of a complete graph of size $a$,  a complete graph of size $b$ and a matching of size $d$, the graph $G_2$ consists of complete graphs of size $a$ and $c$ and a matching of size $d$, and $G_3$ consists of all edges not in $A$. Without loss of generality we may assume $c\ge b$. We may also assume that we do not have the case $b=c=0$ for otherwise the product is only $O(n^5)$.  Then the following holds:
\[
\left(\binom{a}{2}+\binom{b}{2}+d\right)\left(\binom{a}{2} +\binom{c}{2}+d \right)\le \left(\binom{a}{2} +\binom{b}{2} \right) \left(\binom{a}{2} +\binom{c+2d}{2} \right).
\]
Therefore $d=0$, otherwise by changing the coloring, we could increase the product. Thus $a+b+c=n$ and since $a\ge 0.72n$, we have
\[
\left(\binom{a}{2}+\binom{b}{2}\right) \left(\binom{a}{2}+\binom{c}{2}\right)\le \left(\binom{a}{2}+\binom{b+c}{2}\right)\binom{a}{2}.
\]
Therefore without loss of generality, we can assume $b=0$, otherwise by changing the coloring, we could increase the product. 
Thus, we have obtained that $G_1$, $G_2$ and $G_3$ have the form of the construction in the proof of Theorem~\ref{main}. That is, we have a partition of the ground set into two parts $X$ and $Y$, and each edge inside $X$ is colored with colors $1$ and $2$, each edge inside $Y$ is colored with colors $2$ and $3$, and all edges between $X$ and $Y$ are colored with~$3$. The maximum product of the number of edges among such constructions is again given asymptotically by maximizing the expression~\eqref{opto}, thus yielding an upper bound of the form $\gamma n^6 \big( 1+o(1) \big)$.    
\end{proof}

Next we obtain some results about graphs avoiding a rainbow $P_4$ (recall $P_4$ denotes the path with $4$ vertices).

\begin{theorem} \label{thm:example_for_3_colors_and_no_rainbow_P4}
There exist graphs $G_1,G_2,G_3$ on a common vertex set of size $n$ and with no rainbow $P_4$ such that 
\[
e(G_1)e(G_2)e(G_3)\ge n^6 \left(\frac{1}{256}-o(1)\right). 
\]
\end{theorem}
\begin{proof}
 Let $[n]=X\cup Y$ be a partition of $[n]$ with $X$ and $Y$ of sizes approximately $0.5n$. Let $G_1$ consist of a complete graph on $X$, let $G_2$ consist of a complete graph on $Y$, and let $G_3$ consist of a complete graph on $X$ and a complete graph on $Y$.
 Observe that $e(G_1)e(G_2)e(G_3) = n^6 \big( \frac{1}{256}-o(1) \big)$.
 \end{proof}

\begin{theorem}
For three graphs $G_1,G_2,G_3$ on a common vertex set of size $n$ with no rainbow $P_4$, we have 
\[
e(G_1)e(G_2)e(G_3)\le n^6 \left(\frac{1}{256}+o(1)\right). 
\]
\end{theorem}
\begin{proof}
 
 Let $V$ be the vertex set of size $n$. Let $H$ be the graph induced by all edges with three colors. Then $H$ is $P_4$-free, so $e(H)\le \ex(n,P_4)\le n$. Hence we can assume that there is no edge with three colors. 

For any vertex $v\in V$ and for any $i \in [3]$, let $d_i(v)$ denote the degree of $v$ in $G_i$. For any $v \in V$, if $d_i(v)=o(n)$ for some $i\in [3]$, then we assume $d_i(v)=0$. Let $c(v)$ be the subset of  $\{1,2,3\}$ such that $d_i(v)\ge 1$ for every $i\in c(v) $ and $d_i(v)= 0$ for every $i\notin c(v)$.  
Let 
\[ V = \left( \bigcup_{i=1}^{3} A_i \right) \cup \left( \bigcup_{1\le i<j\le 3} A_{ij} \right) \cup A_{123} \]
be a partition of $V$, where $A_i$ is the set of vertices $v$ with $c(v)=\{i\}$, $A_{ij}$ is the set of vertices $v$ with $c(v)=\{i,j\}$ and  $A_{123}$ is the set of vertices $v$ with $c(v)=\{1,2,3\}$. We call the sets $A_i$, $A_{ij}$ and $A_{123}$ the parts of $G$.
Let $a_i=|A_i|$ for any $i\in [3]$, $a_{ij}=|A_{ij}|$ for any $1\le i<j\le 3$ and $a_{123}=|A_{123}|$.
If one part of this partition has size $o(n)$, then we assume that the size of this part is $0$. Note that there are no edges in $G[A_{123}]$, there are no edges between any two sets from $\{A_{12},A_{13},A_{23},A_{123}\}$, and there are no edges between $A_{i_1}$ and $A_{i_2}\cup A_{i_3}\cup A_{i_2i_3}$ for any $\{i_1,i_2,i_3\}=\{1,2,3\}$. 

Let $G_1,G_2,G_3$ be graphs on $V$ satisfying the assumptions above and maximizing $e(G_1)e(G_2)e(G_3)$. Let $G$ be the graph $G_1\cup G_2\cup G_3$. Then we may assume that $G[A_i]$ is complete in $G_i$ for any $i\in \{1,2,3\}$ and $G[A_{ij}]$ is complete in $G_i$ and $G_j$ for any $1\le i<j\le 3$. We may also assume that every vertex in $A_i$ is connected to every vertex in $A_{123}$ with an edge of color $i$ for $i\in [3]$ and every vertex in $A_{i,j}$ is connected to every vertex in $A_i$ and in $A_j$ with an edge of color $i$ and of color $j$, respectively.

Let $e_i=e(G_i)$ for any $i\in [3]$ and let 
\[ k(v):=\frac{d_1(v)}{e_1}+\frac{d_2(v)}{e_2}+\frac{d_3(v)}{e_3} \] for each $v\in V$. Then 
\[
\sum\limits_{v\in V}k(v)=\sum\limits_{v\in V}\sum\limits_{i=1}^3\frac{d_i(v)}{e_i}=\sum\limits_{i=1}^3\sum\limits_{v\in V}\frac{d_i(v)}{e_i}=6.
\]

\begin{lemma}\label{1}
	For any $v\in V$, we have
	\[ k(v)=\frac{6}{n} + o \left( \frac{1}{n} \right). \]
\end{lemma}
\begin{proof}
Let $u,v$ be two vertices in $V$. By maximality of $e_1 e_2 e_3$, putting $u$ from its part to the part of $v$, we obtain
\[
e_1e_2e_3\ge \big( e_1-d_1(u)+d_1(v)-1 \big) \big( e_2-d_2(u)+d_2(v)-1 \big) \big( e_3-d_3(u)+d_3(v)-1 \big).
\]
Hence $k(u)+ o \big( \frac{1}{n} \big) \ge k(v)$. Similarly, by putting $v$ from its part to the part of $u$, we obtain $k(v)+ o \big( \frac{1}{n} \big) \ge k(u)$. Hence $k(v)+ o \big( \frac{1}{n} \big) = k(u)$. Thus $k(v)=\frac{6}{n}+o \big( \frac{1}{n} \big)$ holds for any $v\in V$.
\end{proof}

By Theorem~\ref{thm:example_for_3_colors_and_no_rainbow_P4}, we may assume that $e_1e_2e_3\ge n^6/256$. Then $e_1+e_2+e_3\ge 3\sqrt[3]{n^6/256}>0.47n^2$. 

{\bf Case 1:}  $A_i$ is not empty for any $i\in [3]$.

Let $v_i$ be a vertex in $A_i$ for any $i \in [3]$, and let $\{i, j,k\}=\{1,2,3\}$. Applying Lemma~\ref{1} to $v_i$ for any $i\in [3]$, we obtain
\[
k(v_i)=\frac{a_i+a_{ij}+a_{ik}+a_{ijk}}{e_i}=\frac{6}{n}+o \left( \frac{1}{n} \right).
\]
Hence
\[ e_1+e_2+e_3=\frac{n}{6} \left( \sum\limits_{i=1}^{3}a_i+2\sum\limits_{1\le i<j\le 3}a_{ij}+3a_{123} \right)+o(n^2)>0.47n^2. \]
Then $a_{123}>0.8n$. Since there is no edge in $G[A_{123}]$, each edge is colored at most two times, thus $e_1+e_2+e_3\le 2 \big( \binom{n}{2}-\binom{0.8n}{2} \big)<0.47n^2$, a contradiction. 

{\bf Case 2:}  One of the parts $A_i$, say $A_1$, is empty and the other two are not.

First, observe that $A_{123}$ is empty. Applying Lemma \ref{1} to some vertex in $A_2$ and in $A_3$, we obtain
\[
\frac{a_2+a_{12}+a_{23}}{e_2}+o \left(\frac{1}{n} \right) =\frac{a_3+a_{13}+a_{23}}{e_3}+o \left( \frac{1}{n} \right) =\frac{6}{n}+o \left( \frac{1}{n} \right).
\]

Hence $e_2+e_3=\frac{n}{6}(n+a_{23})+o(n^2)$. Then $e_2e_3\le \big( \frac{n}{12} \big)^2(n+a_{23})^2+o(n^4)$. By $e_1\le (n-a_{23})^2/2$, we know 
\begin{align*}
e_1e_2e_3\le \left( \frac{n}{12} \right)^2 \frac{(n+a_{23})^2 (n-a_{23})^2}{2}+o(n^6)&\le n^6 \left( \frac{1}{288}+o(1) \right) \\
&< n^6 \left( \frac{1}{256}+o(1) \right).
\end{align*}

{\bf Case 3:}  Two of the parts $A_i$, say $A_1$ and $A_2$, are empty and the other one is not.

First, observe that $A_{123}$ is empty. If $A_{13}$ is empty, then moving all vertices in $A_3$ to $A_{23}$ would increase the product. Hence we may assume that $A_{13}$ and $A_{23}$ are not empty. Applying Lemma~\ref{1} to some vertex in $A_{13}$, in $A_{3}$ and in $A_{23}$, we obtain
\begin{align*}
\frac{a_3+a_{13}}{e_3}+\frac{a_{13}}{e_1}+o \left( \frac{1}{n} \right) =\frac{a_3+a_{13}+a_{23}}{e_3}+o \left( \frac{1}{n} \right) &=\frac{a_3+a_{23}}{e_3}+\frac{a_{23}}{e_2}+o \left( \frac{1}{n} \right) \\
&=\frac{6}{n}+o \left( \frac{1}{n} \right).
\end{align*}

Hence $\frac{a_{13}}{e_1} = \frac{a_{23}}{e_3} + o \big( \frac{1}{n} \big)$ and $\frac{a_{13}}{e_3} = \frac{a_{23}}{e_2} + o \big( \frac{1}{n} \big)$. So $e_3^{2}=e_1e_2+o(n^4)$ and $\frac{a_{13}e_2}{a_{23}}=\frac{a_{23}e_1}{a_{13}}+o(n^2)$. By $e_1=(a_{12}^2+a_{13}^2)/2+o(n^2)$ and $e_2=(a_{12}^2+a_{23}^2)/2+o(n^2)$, we have 
\[
a_{13}^2 \big( a_{12}^2+a_{23}^2 \big) = a_{23}^2 \big( a_{12}^2+a_{13}^2 \big) +o(n^4).
\]
Hence $a_{12}=0$ or $a_{13}=a_{23}+o(n)$. 
If $a_{12}=0$, then since $|A_3| > o(n)$, we know $e_3^{2}>e_1e_2+o(n^4)$, a contradiction. So we have $a_{13}=a_{23}+o(n)$.
Thus by 
\[ \frac{a_{13}}{e_1}=\frac{a_{23}}{e_3}+o \left( \frac{1}{n} \right),\]
we obtain $e_1=e_3+o(n^2)$.
Hence, 
\[
\frac{(a_{12}^2+a_{13}^2)}{2}=\frac{(n-a_{12})^2}{2}-a_{13}a_{23}+o(n^2)=\frac{(n-a_{12})^2}{2}-a_{13}^2+o(n^2).
\]

Then $(n^2-2na_{12})/2=3a_{13}^2/2+o(n^2)\le n^2 \big( 3/8+o(1) \big)$, so $n \big( 1/2 + o(1) \big) \ge a_{12}\ge n \big( 1/8-o(1) \big)$. It follows that
 \begin{align*}
	e_1+e_2+e_3 & = \frac{a_{12}^2 + a_{13}^2}{2} + \frac{a_{12}^2 + a_{23}^2}{2} + \left( \frac{(n-a_{12})^2}{2} - a_{13} a_{23} \right) + o(n^2) \\
	& = a_{12}^2 + \frac{(n-a_{12})^2}{2} + \frac{(a_{13} - a_{23})^2}{2} + o(n^2) \\
	& = a_{12}^2 + \frac{(n-a_{12})^2}{2} + o(n^2) \\	
	& = \frac{3}{2} \left( a_{12}-\frac{n}{3} \right)^2+\frac{n^2}{3}+o(n^2)\\
	& \le \frac{51n^2}{128}+o(n^2)<0.47n^2.
\end{align*}

{\bf Case 4:}  $A_i$ is  empty for all $i\in [3]$.

First, observe that $A_{123}$ is empty. Without loss of generality, we can assume $a_{12}\le a_{13}\le a_{23}$. Then the following holds:
\begin{multline*}
 \big( a_{12}^2+a_{13}^2 \big) \big( a_{13}^2+a_{23}^2 \big) \big( a_{12}^2+a_{23}^2 \big) \\
 \le \left( \left(a_{13}+\frac{a_{12}}{2} \right)^2+ \left( a_{23}+\frac{a_{12}}{2}  \right)^2 \right) \left( a_{13}+\frac{a_{12}}{2} \right)^2 \left( a_{23}+\frac{a_{12}}{2} \right)^2.
\end{multline*}
The preceding inequality is easy to verify by taking a difference of the right-~and left-hand sides, multiplying out and pairing off negative terms with positive ones, using $a_{12}\le a_{13}\le a_{23}$.
It follows that $a_{12}=0$ since otherwise we would increase the product $e_1e_2e_3$ by moving half of the vertices from $A_{12}$ to $A_{13}$ and the other half to $A_{23}$. 
Then 
\begin{align*}
e_1e_2e_3 = \frac{\big( a_{13}^2+a_{23}^2 \big) a_{13}^2a_{23}^2}{8} + o(n^6) &= \frac{\big( n^2-2a_{13}a_{23} \big) (a_{13}a_{23})^2}{8} + o(n^6) \\
&\le n^6 \left( \frac{1}{256}+o(1) \right).\qedhere
\end{align*}

\end{proof}

\begin{theorem} \label{thm:example_for_4_colors_and_no_rainbow_P4}
There exist graphs $G_1,G_2,G_3,G_4$ on a common vertex set of size $n$ and with no rainbow $P_4$ such that 
\[
e(G_1)e(G_2)e(G_3)e(G_4)\ge n^8 \left( \frac{1}{4096}-o(1) \right).
\]
\end{theorem}
\begin{proof}
Let $[n]=X\cup Y$ be a partition of $[n]$ with $X$ and $Y$ of sizes approximately $0.5n$. Let $G_1,G_2$ consist of a complete graph on $X$ and $G_3,G_4$ consist of a complete graph on $Y$. Observe that $e(G_1)e(G_2)e(G_3)e(G_4) = n^8 \big( \frac{1}{8^4}-o(1) \big)$.
\end{proof}

\begin{theorem}
For four graphs $G_1,G_2,G_3,G_4$ on a common vertex set of size $n$ and with no rainbow $P_4$, we have 
\[
e(G_1)e(G_2)e(G_3)e(G_4)\le n^8 \left( \frac{1}{8^4}+o(1) \right). 
\]
\end{theorem}

\begin{proof}
 Let $V$ be the vertex set. Let $H$ be the graph induced by all edges with at least three colors. 
 Then $H$ is $P_4$-free. Hence $e(H) \le \ex(n,P_4)\le n$. So we can assume that there is no edge with at least three colors. 

For any vertex $v\in V$ and for any $i \in [4]$, let $d_i(v)$ denote the degree of $v$ in $G_i$. For any $v \in V$, if $d_i(v)=o(n)$ for some $i\in [4]$, then we assume $d_i(v)=0$. Let $c(v)$ be the subset of  $\{1,2,3,4\}$ such that $d_i(v)\ge 1$ for every $i\in c(v) $ and $d_i(v)= 0$ for every $i\notin c(v) $.  
Let 
\[ V = \left( \bigcup_{i=1}^{4} A_i \right) \cup \left( \bigcup_{1\le i<j\le 4} A_{ij} \right) \cup A \]
be a partition of $V$, where $A_i$ is the set of vertices $v$ with $c(v)=\{i\}$, and $A_{ij}$ is the set of vertices $v$ with $c(v)=\{i,j\}$ and $A$ is the set of vertices $v$ with $|c(v)|\ge 3$. Let $a_i=|A_i|$ for any $i\in [4]$, $a_{ij}=|A_{ij}|$ for any $1\le i<j\le 4$ and $a=|A|$. If one part of this partition has size $o(n)$, then we assume the size of this part is~$0$. Note that there are no edges in $G[A]$, there are no edges between $A$ and $\bigcup_{1\le i<j\le 4}A_{ij}$, no edges between any two sets in $\{A_{ij}:1\le i<j\le 4\}$ and no edges between $A_i$ and $\big( \bigcup_{j\in [4]\setminus \{i\}} A_j \big) \cup \big( \bigcup_{ k,l \in [4]\setminus \{i\}: \, k<l} A_{kl})$.

Let $G_1,G_2,G_3,G_4$ be graphs on $V$  satisfying the assumptions above and maximizing the product of the size of the edge sets, and let $G$ be the graph $G_1\cup G_2\cup G_3\cup G_4$. 
By Theorem~\ref{thm:example_for_4_colors_and_no_rainbow_P4}, we may assume $e(G_1)e(G_2)e(G_3)e(G_4)\ge n^8 \big( 1/4096+o(1) \big)$. Therefore $e(G_1)+e(G_2)+e(G_3)+e(G_4) \ge n^2/2 + o(n^2)$. Since there are at most $n^2/2$ edges in $G$ and no edges with at least three colors, we know that the number of edges with two colors is at least as great as the number of missing edges in $G$.
Then 
\[
\sum\limits_{1\le i<j\le 4}\frac{a_{ij}^2}{2}>\sum\limits_{1\le i<j\le 4}\frac{a_{ij}}{2}(n-a_i-a_j-a_{ij}).
\]
Hence there are $i,j \in [4]$, say $1$ and $2$, such that $\frac{a_{12}^2}{2}>\frac{a_{12}}{2}(n-a_1-a_2-a_{12})$, so $2a_{12}+a_1+a_2>n$.
Let $B=A_1\cup A_2\cup A_{12}$ and $C=A_3\cup A_4\cup A_{34}$, let $b=|B|$ and let $c=|C|$. Then $b>n/2$ and $b+c\le n$. Clearly, there are no edges of color $3$ or $4$ adjacent to any vertex in $B$ and all edges of colors both $3$ and $4$ are in $G[A_{34}]$. 
It follows that
\[
e(G_3)+e(G_4)\le \frac{(n-a_{12}-a_1-a_2)^2}{2}+\frac{a_{34}^2}{2}\le \frac{(n-b)^2 + c^2}{2}.
\]

Clearly, there are no edges of color $1$ or $2$ adjacent to any vertex in $C$, all edges of colors both $1$ and $2$ are in $G[A_{12}]$, and there are no edges between $A_{12}$ and $V\setminus (B\cup C)$. 
Thus,
\[
e(G_1)+e(G_2)\le \frac{(n-a_{34}-a_3-a_4)^2}{2}+\frac{a_{12}^2}{2}-a_{12}(n-b-c).
\]
By $2a_{12}+a_1+a_2>n$, we have
\[
e(G_1)+e(G_2)\le \frac{(n-c)^2}{2}+\frac{b^2}{2}-b(n-b-c)=\frac{(n-b-c)^2 + 2b^2}{2}.
\]
Then
\[
e(G_1)e(G_2)e(G_3)e(G_4)\le \left(\frac{(n-b-c)^2+2b^2}{4}\right)^2 \left(\frac{(n-b)^2+c^2}{4}\right)^2.
\]
Since $(n-b)^2+c^2+(n-b-c)^2+2b^2\le 2(n-b)^2+2b^2$ and $(n-b-c)^2+2b^2\ge 2b^2\ge (n-b)^2+b^2$, we obtain
\[
((n-b)^2+c^2)((n-b-c)^2+2b^2)\le 4(n-b)^2b^2\le \frac{n^4}{4}.
\]
Thus $e(G_1)e(G_2)e(G_3)e(G_4)\le \frac{n^8}{8^4}$.
\end{proof}



We conclude by mentioning some open problems.  
First, it appears plausible that for $6$ graphs $G_1,G_2,\ldots,G_6$ avoiding a rainbow $K_4$, the asymptotically optimal construction is simply $6$ copies of the Tur\'an graph with $3$ parts.  

Second, for $2k$ graphs avoiding a rainbow $P_4$, one can take an independent set of size $n(2k-1)/(4k-1)$ and $2k$ cliques, each of size $n/(4k-1)$, each containing edges of one of the colors, as well as the edges between each of the cliques and the independent set in their respective colors.  It appears this construction may be asymptotically optimal.

Finally, for $k$ graphs avoiding a rainbow path $P_{k+1}$ ($k+1$ vertices), one could take two disjoint cliques of size $n/2$, each consisting of $k$-colored edges from two different sets of $k$ colors.   

\section*{Acknowledgments}
The research of Frankl, Gy\H{o}ri and Salia was partially supported by the National Research, Development and Innovation Office NKFIH, grants  K132696, SNN-135643 and K126853. 
The research of Salia was supported by the Institute for Basic Science (IBS-R029-C4). 
The research of Tompkins was supported by NKFIH grant K135800.
\vspace{-.7em}

\end{document}